\documentclass[12pt]{amsart}
\usepackage{amsmath,amssymb,amsfonts,amsthm,amstext,verbatim}
\usepackage{url,xspace,enumerate,color,graphicx,stmaryrd}

\newcommand{\Z}{{\mathbb Z}}
\newcommand{\R}{{\mathbb R}}

\renewcommand{\P}{{\mathbb P}}
\newcommand{\E}{{\mathbb E}}
\renewcommand{\L}{{\mathbb L}}

\newcommand{\dof}{\bf\boldmath}
\newcommand\down{\kern5pt\downarrow\kern-4pt}

\newcommand\oo{\infty}
\newcommand\sm{\setminus}

\newcommand\rad{\text{\rm rad}\,}
\renewcommand\b{\beta}

\newcommand\resp{respectively}

\newcommand\eps{\epsilon}
\renewcommand\a{\alpha}

\newcommand\pc{p_{\text{\rm c}}}
\newcommand\pco{\vec p_{\text{\rm c}}}
\newcommand\pe{p_{\text{\rm e}}}
\newcommand\tg{\theta_{\text{\rm g}}}
\newcommand\pg{p_{\text{\rm g}}}
\newcommand\ta{\theta_{\text{\rm a}}}
\newcommand\pa{p_{\text{\rm a}}}
\newcommand\tah{\theta_{\text{\rm a}}^H}
\newcommand\pah{p_{\text{\rm a}}^H}

\newcommand\q{\quad}
\newcommand\qq{\qquad}
\newcommand\adm{admissible}

\newcommand\comp{\overline}
\newcommand\toa{\to_{\text{\rm a}}}
\newcommand\too{\to_{\text{\rm oo}}}
\newcommand\toah{\toa^H}

\newtheorem{thm}{Theorem}
\newtheorem{lemma}[thm]{Lemma}
\newtheorem{prop}[thm]{Proposition}
\newtheorem{cor}[thm]{Corollary}
\newtheorem{question}{Open Question}

\newenvironment{letlist}{\begin{list}{{\rm(\alph{mycount})}}%
   {\usecounter{mycount}\labelwidth=1cm\itemsep 0pt}}{\end{list}}

\newcounter{mycount}

\begin{document}
\title{Plaquettes, Spheres, and Entanglement}
\author[Grimmett]{Geoffrey R.\ Grimmett}
\address[G.\ R.\ Grimmett]{Statistical Laboratory, Centre for
Mathematical Sciences, Cambridge University, Wilberforce Road,
Cambridge CB3 0WB, UK} \email{g.r.grimmett@statslab.cam.ac.uk}
\urladdr{http://www.statslab.cam.ac.uk/$\sim$grg/}

\author[Holroyd]{Alexander E.\ Holroyd}
\address[A.\ E.\ Holroyd]{Microsoft Research, 1 Microsoft Way,
Redmond WA 98052, USA; and Department of Mathematics, University of British
Columbia, 121--1984 Mathematics Road, Vancouver, BC V6T 1Z2, \linebreak
Canada} \email{holroyd at math.ubc.ca}
\urladdr{http://research.microsoft.com/$\sim$holroyd/}

\begin{abstract}
The high-density plaquette percolation model in $d$ dimensions
contains a surface that is homeomorphic to the $(d-1)$-sphere
and encloses the origin. This is proved by a path-counting
argument in a dual model. When $d=3$, this permits an improved
lower bound on the critical point $\pe$ of entanglement
percolation, namely $\pe\ge \mu^{-2}$ where $\mu$ is the
connective constant for self-avoiding walks on $\Z^3$.
Furthermore, when the edge density $p$ is below this bound, the
radius of the entanglement cluster containing the origin has an
exponentially decaying tail.
\end{abstract}

\date{10 February 2010 (revised 15 August 2010)}

\keywords{Entanglement, percolation, random sphere}
\subjclass[2010]{60K35, 82B20}

\maketitle

\section{Introduction and results}\label{sec:intro}

The {\dof plaquette percolation} model is a natural dual to
bond percolation in two and more dimensions.  Let $\Z^d$ be the
integer lattice; elements of $\Z^d$ are called {\dof sites}.
For any site $z$, let $Q(z):=[-\tfrac12,\tfrac12]^d+z$ be the
topologically closed unit $d$-cube centred at $z$.  A {\dof
plaquette} is any topologically closed unit $(d-1)$-cube in
$\R^d$ that is a face of some $Q(z)$ for $z\in\Z^d$.  Let
$\Pi_d$ be the set of all plaquettes. For a set of plaquettes
$S\subseteq\Pi_d$, we write $[S]:=\bigcup_{\pi\in S}\pi$ for
the associated subset of $\R^d$.  In the plaquette percolation
model with parameter $p\in[0,1]$, each plaquette of $\Pi_d$ is
declared {\dof occupied} with probability $p$, otherwise {\dof
unoccupied}, with different plaquettes receiving independent
states; the associated probability measure is denoted $\P_p$.

The $\ell^s$-norm on $\R^d$ is denoted $\|\cdot\|_s$. A {\dof
sphere} of $\R^d$ is a simplicial complex, embedded in $\R^d$,
that is homeomorphic to the unit sphere
$\{x\in\R^d:\|x\|_2=1\}$. By the generalized Sch\"onflies
theorem, the complement in $\R^d$ of a sphere has a bounded and
an unbounded path-component, which we call respectively its
{\dof inside} and {\dof outside}. The ($\ell^1$-){\dof radius}
of a set $A\subseteq\R^d$ with respect to the origin $0\in\R^d$
is
$$
\rad A = \text{rad}_0
A:=\sup\{\|x\|_1:x\in A\}.
$$

Let $\mu_d$ be the {\dof connective constant} of $\Z^d$,
given as
$$
\mu_d:=\lim_{k\to\infty} \sigma(k)^{1/k},
$$
where $\sigma(k)$ is the number of ($\ell^1$-)nearest-neighbour
self-avoiding paths from the origin with length $k$ in $\Z^d$;
it is a straightforward observation that $\mu_d\in[d,2d-1]$,
and stronger bounds may be found, for example, in \cite{ponitz-tittman}.

\begin{thm}\label{sphere}
Let $d \ge 2$, and consider the plaquette percolation model.  If
$p<\mu_d^{-2}$ then almost surely there exists a finite set $S$
of unoccupied plaquettes whose union $[S]$ is a sphere with $0$
in its inside.  Moreover $S$ may be chosen so that
$$\P_p\big(\rad[S]\geq r\big)\leq C \alpha^r,\qquad r>0,$$
for any $\alpha\in(\mu_d p,1)$, and some
$C=C(p,d,\alpha)<\infty$.
\end{thm}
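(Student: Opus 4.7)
My plan is to exploit the natural bond-percolation dual of the plaquette model. Each plaquette of $\Pi_d$ is pierced by a unique edge of the shifted lattice $\Z^d+(\tfrac12,\ldots,\tfrac12)$; declare such a dual edge \emph{open} when the plaquette is occupied (probability $p$) and \emph{closed} otherwise. A set $S$ of unoccupied plaquettes whose union encloses the origin then corresponds to a set of closed dual edges whose geometric realisation is a topological $(d-1)$-sphere around $0$.

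The first step is to control the open-edge cluster $C$ of the origin in the dual. For $p<\mu_d^{-1}$, a self-avoiding dual path of length $r$ starting at $0$ is open with probability $p^r$; summing over the at most $\sigma(r)$ such paths and using $\sigma(r)^{1/r}\to\mu_d$ yields the Peierls bound
$$\P_p\bigl(\rad C\ge r\bigr)\le C_1(\mu_d p)^r.$$
Since $p<\mu_d^{-2}$ implies $p<\mu_d^{-1}$, the cluster $C$ is almost surely finite with exactly the exponential tail demanded by the last clause of the theorem.

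Next I extract a topological sphere. The polyhedron $Q(C):=\bigcup_{v\in C}Q(v)$ has topological boundary consisting entirely of unoccupied plaquettes, since any occupied boundary plaquette would extend $C$. In $d=2$ the boundary of the union of $Q(C)$ with the finite components of its complement is automatically a topological circle; but in $d\ge3$ this boundary may carry handles --- if, for instance, $C$ is solid-torus-shaped, then $\partial Q(C)$ is a torus. To obtain a genuine $(d-1)$-sphere I would enlarge $C$ to a deterministic \emph{ball-like} region $D\supseteq C$ whose polyhedral boundary is homeomorphic to $S^{d-1}$ and all of whose plaquettes are unoccupied; a natural candidate is the smallest axis-aligned box circumscribing $C$, padded by an additional layer if any of its bounding plaquettes happens to be occupied, and one then sets $S:=\partial Q(D)$.

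Finally, to deduce the exponential tail on $\rad[S]$, I would combine the tail on $\rad C$ with a control of the overhead introduced by the padding step. The event $\{\rad[S]\ge r\}$ forces either $C$ itself to reach radius of order $r$ (contributing at most $C_1(\mu_d p)^r$), or every candidate enlargement up to scale $r$ to be blocked by an occupied plaquette; the latter obstruction can be parameterised by an additional dual self-avoiding path, and this second path-counting factor is what raises the existence hypothesis from the subcritical $p<\mu_d^{-1}$ to the strictly stronger $p<\mu_d^{-2}$. The principal obstacle I anticipate is the third step: producing a deterministic, measurable recipe for $D$ that simultaneously guarantees sphere topology and yet keeps $\rad[S]\le \rad C + o(\rad C)$, for a wasteful recipe would either require a stronger hypothesis on $p$ or produce a worse exponent than the claimed $\alpha>\mu_d p$ in the tail bound.
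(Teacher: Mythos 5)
You have correctly identified the crux of the problem --- the boundary of the occupied cluster $C$ (or of the cubes it meets) may have handles and so need not be a sphere --- but the fix you propose does not work, and the central idea of the paper's argument is missing.

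The key difficulty with enlarging $C$ to a circumscribing box $D$ is that the boundary plaquettes of an axis-aligned box at scale $n$ number on the order of $n^{d-1}$, and each is occupied independently with probability $p$; the probability that an entire box shell consists of unoccupied plaquettes is $(1-p)^{cn^{d-1}}\to 0$. Padding by an extra layer does not help, because the next shell has the same failure probability: the expected number of layers one must add before finding a fully unoccupied shell is infinite. Nor does the obstruction parameterise cleanly by a dual self-avoiding path --- occupied plaquettes on successive shells need not be connected to one another or to $C$, so the ``second path-counting factor'' you invoke has no obvious realisation, and the heuristic that this is what lowers $\mu_d^{-1}$ to $\mu_d^{-2}$ cannot be made rigorous along these lines.

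The paper's actual route avoids enlargement entirely. The crucial concept is a \emph{good path}: a self-avoiding lattice path $0=v_0,\ldots,v_k$ in which only the \emph{outward} steps (those with $\|v_i\|_1>\|v_{i-1}\|_1$) are required to be occupied, while backward steps are free. Let $K$ be the set of sites reachable from $0$ by a good path. Because backward steps cost nothing, $K$ is closed under the partial order $y\preceq x$ (coordinate-wise smaller in absolute value and same signs), and this makes $U:=\bigcup_{x\in K}Q(x)$ \emph{strictly star-shaped} about $0$; radial projection then gives a homeomorphism from $\partial U$ to $S^{d-1}$ directly, with no handles possible and no enlargement needed (Proposition~\ref{boundary}). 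Every boundary plaquette of $U$ is automatically unoccupied, since otherwise the good path could be extended outward. Finally, the same ``only outward steps cost'' structure is exactly what produces the exponent $\mu_d^{-2}$: a good path to $\ell^1$-distance $r$ with $B$ backward steps has $A=B+r$ forward steps and length $2B+r$, so $\E_p N(r)\le\sum_{B\ge0}\sigma(2B+r)p^{B+r}$, which converges and is $O(\alpha^r)$ for $\alpha\in(\mu_d p,1)$ precisely when $\mu_d^2 p<1$. In short, the same device that forces star-shapedness (and hence sphere topology for free) is the one that explains the $\mu_d^{-2}$ threshold; your proposal treats these as two separate problems and solves neither.
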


When $d=2$, the first assertion of Theorem \ref{sphere} amounts
to the well known fact that there exists a suitable circuit of
unoccupied bonds of the dual lattice (see, e.g., \cite{g2}).
The result is more subtle in higher dimensions.

When $d=3$, Theorem \ref{sphere} has an application to
entanglement percolation, which we explain next.  Define a
{\dof bond} to be the topologically closed line segment in
$\R^d$ joining any two sites $x,y\in\Z^d$ with $\|x-y\|_1=1$.
Let $\L_d$ be the set of all bonds. In the {\dof bond
percolation} model, each bond is declared {\dof occupied} with
probability $p$, otherwise {\dof unoccupied}, with the states
of different bonds being independent. For a set of bonds $K$,
write $[K]:=\bigcup_{e\in K} e$.  We say that $K$ contains a
site $x$ if $x\in[K]$.

We say that a sphere $Z\subset \R^d$ {\dof separates} a set
$A\subset\R^d$ if $A$ intersects both the inside and the
outside of $Z$, but not $Z$ itself.  (We write $A \subset B$ if
$A \subseteq B$ and $A \ne B$.) Let $d=3$.  We say that a set
of bonds $K\subseteq\L_3$ is $1$-{\dof entangled} if no sphere
of $\R^3$ separates $[K]$.  The idea of this definition is that
a $1$-entangled set of bonds, if made of string or elastic,
cannot be continuously ``pulled apart''. Any connected set of
bonds is evidently $1$-entangled. The simplest disconnected set
that is $1$-entangled consists of two linked loops. The prefix
``$1$'' reflects the fact that other natural definitions of
entanglement are possible; see \cite{gh} and the discussion in
Section \ref{sec:rem} for more details. Entanglement of sets of
bonds is intrinsically a three-dimensional issue, and therefore
we shall always take $d=3$ when discussing it.

In the bond percolation model in $d=3$, let $\eta^1(p)$ be the
probability that there exists an infinite $1$-entangled set of
occupied bonds  containing the origin $0$, and define the
$1$-entanglement {\dof critical probability} $\pe^1:=\sup\{p:
\eta^1(p)=0\}$.  The maximal $1$-entangled set of occupied
bonds containing a site $x$ is called the $1$-entanglement
{\dof cluster} at $x$.

\begin{cor}\label{ent} The $1$-entanglement critical probability
in three dimensions satisfies
$$
\pe^1\geq \mu_3^{-2}.
$$
Moreover, if $p<\mu_3^{-2}$, the $1$-entanglement cluster
$E$ at the origin satisfies
$$
\P_p\big(\rad [E]\geq r\big)\leq C \alpha^r,\qquad r>0,
$$
for any $\alpha\in(\mu_3 p,1)$, and some $C=C(p,\alpha
)<\infty$.
\end{cor}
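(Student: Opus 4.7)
The plan is to deduce the Corollary from Theorem~\ref{sphere} by exploiting the bijective duality in $\R^3$ between bonds and plaquettes: each bond $e\in\L_3$ is perpendicularly bisected by a unique plaquette $\pi(e)\in\Pi_3$, and the map $e\mapsto\pi(e)$ is a bijection. I would couple the two percolation models on a common probability space by declaring $\pi(e)$ occupied if and only if $e$ is occupied, so that both marginals have parameter $p$. For $p<\mu_3^{-2}$, Theorem~\ref{sphere} then supplies, under this coupling, a finite sphere $[S]$ of unoccupied plaquettes with $0$ in its inside and the tail bound $\P_p(\rad[S]\geq r)\leq C\alpha^r$ for $\alpha\in(\mu_3 p,1)$.

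A preparatory geometric observation is that a bond $e$ meets a plaquette $\pi'$ if and only if $\pi'=\pi(e)$, in which case they meet only at the midpoint of $e$. This follows because bond endpoints have integer coordinates while plaquettes lie in affine hyperplanes with one half-integer coordinate, so any intersection point must lie in the relative interior of $\pi'$ at the perpendicular crossing. Under the coupling, $[S]$ therefore meets no occupied bond and contains no site; in particular, each occupied bond lies wholly in the inside of $[S]$ or wholly in the outside.

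The main step is to show that the $1$-entanglement cluster $E$ at the origin is contained in the inside of $[S]$. Decompose $E=E_{\text{in}}\cup E_{\text{out}}$ according to the component of $\R^3\setminus[S]$ in which each bond lies. If both parts were non-empty, then $[E]$ would intersect both components but not $[S]$, so the sphere $[S]$ would separate $[E]$, contradicting the $1$-entangled property of $E$. Since $0$ lies in the inside of $[S]$ and in $[E]$ whenever $E\neq\es$, the outside part is empty and $[E]$ lies in the inside of $[S]$; hence $\rad[E]\leq\rad[S]$. Combining with the tail bound gives $\P_p(\rad[E]\geq r)\leq C\alpha^r$, and almost sure finiteness of $[S]$ forces $\eta^1(p)=0$ and thus $\pe^1\geq\mu_3^{-2}$.

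The conceptual heart is the separation dichotomy, which leverages the genuinely topological definition of $1$-entanglement together with the disjointness of $[S]$ from the occupied bonds. The remaining work is essentially notational: setting up the coupling and invoking Theorem~\ref{sphere}. The $1$-entangled hypothesis on $E$ is used precisely once, to exclude the possibility that $E_{\text{out}}\neq\es$.
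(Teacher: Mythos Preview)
Your proof is correct and follows the same approach as the paper's: couple the bond and plaquette models via $e\leftrightarrow\pi(e)$, invoke Theorem~\ref{sphere} to obtain an unoccupied sphere $[S]$ with $0$ inside, observe that $[S]$ meets no occupied bond, and conclude that $[E]$ lies entirely inside $[S]$. The paper's argument is a terse three sentences; you have simply (and correctly) spelled out the details, including the geometric fact that a bond meets only its dual plaquette and the separation dichotomy coming from the definition of $1$-entanglement.
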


The connective constant of $\Z^3$ satisfies the rigorous bound
$\mu_3 \leq 4.7387$ (see \cite{ponitz-tittman}). Therefore, Corollary
\ref{ent} gives
$$
\pe^1 \geq 0.04453\cdots > \frac{1}{23}.
$$
This is a significant improvement on the previous best lower
bound of \cite{atapour-madras}, namely $\pe^1 > 1/597$, which
in turn substantially improved the first non-zero lower bound,
$\pe^1 \geq 1/15616$, proved in \cite{h-ent}.  In each case,
the improvement is by a factor of approximately $26$.

In Section \ref{sec:rem} we discuss some history and background
to our results.  In Section \ref{sec:pfs} we prove Theorem
\ref{sphere} and Corollary \ref{ent}.  If $p$ satisfies the
stronger bound $p<(2d-1)^{-2}$, we shall see that our methods
yield versions of these results with explicit formulae for the
constants $C$ and $\alpha$.  In Section \ref{sec:crit} we
consider the critical value of $p$ associated with the event in
Theorem~\ref{sphere}, and its relationship to certain other critical
values.

\section{Remarks}\label{sec:rem}

\subsection{Duality}
To each bond $e\in\L_d$ there corresponds a unique plaquette
$\pi(e)\in\Pi_d$ that intersects $e$.  It is therefore natural
to couple the bond and plaquette percolation models with common
parameter $p$ in such a way that $\pi(e)$ is occupied if and
only if $e$ is occupied.  If $p$ is less than the critical
probability $\pc$ for standard bond percolation (see, e.g.,
\cite{g2}), the connected component of occupied bonds at the
origin is almost surely finite, and it is a straightforward
consequence that there exists a finite set of unoccupied
plaquettes whose union encloses the origin (i.e., the origin
lies in some bounded component of its complement).  Indeed,
such plaquettes may be chosen so as to form a `surface'
enclosing the origin (although precise definition of such an
object requires care). However, such a surface might be
homeomorphic to a torus, or some other topological space.  It
is a key point of Theorem \ref{sphere} that the surface $[S]$
is a sphere.

\subsection{Entanglement}
Entanglement in three-dimensional percolation was
first studied, in a partly non-rigorous way, in
\cite{kantor-hassold} (some interesting remarks on the subject
appeared earlier in \cite{accfr}). The rigorous theory was
systematically developed in \cite{gh}, and further rigorous
results appear in
\cite{aizenman-g,haggstrom-ent,h-ent,h-er,h-ent-ineq}. A discussion
of physical applications of entanglement percolation may be found
in \cite{atapour-madras}.

As mentioned in Section \ref{sec:intro}, there are several
(non-equivalent) ways of defining the property of entanglement
for infinite graphs. One of these, namely $1$-entanglement, was
presented in that section, and a second follows next.  We say
that a set of bonds $K\subseteq \L_3$ is $0$-{\dof entangled}
if every finite subset of $K$ is contained in some finite
$1$-entangled subset of $K$. It was shown in \cite{gh} that the
notions of $0$-entanglement and $1$-entanglement are extremal
members of a certain class of natural candidate
definitions, called {\em entanglement systems}, and furthermore
that these two entanglement systems correspond (respectively)
in a natural way to free and wired boundary conditions.

By combining inequalities of \cite{aizenman-g,gh,h-ent}, we
find that
$$
0<\pe^1\leq \pe^\mathcal{E}\leq \pe^0<\pc<1,
$$
where $\pe^0$, $\pe^1$, and $\pe^\mathcal{E}$ are the critical
probabilities for $0$-entanglement, $1$-entanglement, and for
an arbitrary entanglement system $\mathcal{E}$, respectively.
The inequality $\pe^0\leq \pc$ reflects the straightforward
fact that every connected set of bonds is $0$-entangled.  It
was strengthened to the strict inequality $\pe^0 < \pc$ in
\cite{aizenman-g,h-ent-ineq}. In \cite{kantor-hassold} it was
argued on the basis of numerical evidence that $\pc-\pe\approx
1.8\times 10^{-7}$, for a certain notion of `entanglement
critical probability' $\pe$.  It is an open question to decide
whether or not $\pe^0 = \pe^1$.

\subsection{Spheres, lower bounds, and exponential decay}
The inequality $\pe^1>0$ expresses the fact that, for a
sufficiently small density $p$ of occupied bonds, there is no
infinite entangled set of bonds.  Prior to the current paper,
proofs of this seemingly obvious statement have been very
involved.

The proof in \cite{h-ent} employs topological arguments to show
that, for $p<1/15616$, almost surely the origin is enclosed by
a sphere that intersects no occupied bond.  The argument is
specific to three dimensions, and does not resolve the question
of the possible existence of a sphere of unoccupied plaquettes
enclosing the origin. (See \cite{gh} for more on the
distinction between spheres intersecting no occupied bond, and
spheres of unoccupied plaquettes.) In \cite{gh}, related
arguments are used to show that, for sufficiently small $p$,
the radius $R$ of the $1$-entanglement cluster at the origin
has `near-exponential' tail decay in that
$$
\P(R>r)<\exp\bigl(-c r/\log\cdots\log r\bigr)
$$
for an arbitrary iterate of the logarithm, and for some $c > 0$
depending on $p$ and the number of logarithms.

In the recent paper \cite{atapour-madras}, the above results
are substantially improved in several respects.  The lower
bound on the critical point is improved to approximately
$\pe^1>1/597$, and it is proved also that the radius of the
$1$-entanglement cluster at the origin has exponential tail
decay for $p$ below the same value.  The key innovation is a
proof of an exponential upper bound on the number of possible
$1$-entangled sets of $N$ bonds containing the origin, thereby
answering a question posed in \cite{gh}.  The method of proof
is very different from that of \cite{h-ent}.

In the current article, we improve the proofs mentioned above
in several regards. The lower bound on the critical point is
further improved to approximately $\pe^1>1/23$, and we
establish exponential decay of the $1$-entanglement
cluster-radius at the origin for $p$ below this value. We prove
the existence of a sphere of unoccupied plaquettes enclosing
the origin (rather than just a sphere intersecting no occupied
bond), and we do so for all dimensions.  Finally, our proofs
are very simple. Our methods do not appear to imply the key
result of \cite{atapour-madras} mentioned above, namely the
exponential bound for the number of entangled sets containing the origin.

\section{Proofs}\label{sec:pfs}

The geometric lemma below is the key to our construction of a
sphere. For $x,y\in\R^d$, write $y\preceq x$ if for each
$i=1,2,\ldots,d$ we have $|y_i|\leq |x_i|$ and $x_i y_i\geq 0$
(equivalently, $y$ lies in the closed cuboid with opposite
corners at $0$ and $x$).  For a bond $e\in\L_d$, recall that
$\pi(e)\in\Pi_d$ is the unique plaquette that intersects it.

\begin{prop}\label{boundary}  Let $d\geq 2$.
Suppose $K\subset\Z^d$ is a finite set of sites containing $0$,
with the property that, if $x\in K$, then every $y\in\Z^d$ with
$y\preceq x$ lies in $K$. Let
\begin{equation}\label{def-s}
S:=\big\{\pi(e): e\ \text{\rm is a bond with
exactly one endvertex in }K\big\}.
\end{equation}
Then $[S]$ is a sphere with $0$ in its inside.
\end{prop}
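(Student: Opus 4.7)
The plan is to consider the polyhedral region $U := \bigcup_{x \in K} Q(x) \subset \R^d$ and to show that $U$ is homeomorphic to a closed $d$-ball whose topological boundary is exactly $[S]$. Since $0 \in K$, the origin lies in the interior of $U$, so $[S]$ will then be a sphere with $0$ in its inside.

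\emph{Step 1: $\partial U = [S]$.} A point $p \in \R^d$ lies in $\mathrm{int}\,U$ iff every cube $Q(z)$ with $p \in Q(z)$ has $z \in K$, since a small ball around $p$ is covered by those finitely many cubes. Hence $p \in \partial U$ iff the set $I(p) := \{z \in \Z^d : p \in Q(z)\}$, which is a sub-cuboid of $\Z^d$, meets both $K$ and $\Z^d \setminus K$. Walking inside $I(p)$ along unit steps from a site in $K$ to one outside, one finds at the crossing step a bond $e = \{x,y\}$ with exactly one endvertex in $K$ and $p \in Q(x) \cap Q(y) = \pi(e) \in S$. The reverse inclusion $[S] \subseteq \partial U$ is immediate, since any plaquette $\pi(e) \in S$ has a cube of $U$ on one side and a cube outside $U$ on the other.

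\emph{Step 2: Star-shapedness.} This is the heart of the argument: I claim that $tp \in U$ whenever $p \in U$ and $t \in [0,1]$. Pick $x \in K$ with $p \in Q(x)$ and write $p_i = x_i + q_i$ with $q_i \in [-\tfrac12, \tfrac12]$. For each coordinate $i$ independently, a direct check shows that one can choose an integer $y_i$ lying between $0$ and $x_i$ (inclusive) with $|y_i - tp_i| \le \tfrac12$: indeed, $tp_i$ lies between $0$ and $x_i + q_i$ (with the same sign as $x_i$, or zero), and the unit intervals $[k - \tfrac12, k + \tfrac12]$ for $k$ running from $0$ to $x_i$ in unit signed steps cover this range. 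The resulting point $y = (y_1, \ldots, y_d)$ then satisfies $y \preceq x$, so $y \in K$ by the downward-closure hypothesis and $tp \in Q(y) \subseteq U$, as required.

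\emph{Step 3: Conclusion.} Define the radial function $R(\theta) := \sup\{r \ge 0 : r\theta \in U\}$ for $\theta \in S^{d-1}$. Because $U$ is the finite union of unit cubes with $0$ in its interior, $R$ is continuous and strictly positive, and Step 2 ensures that the radial map $\Phi(s\theta) := sR(\theta)\theta$ is a homeomorphism from the closed unit ball onto $U$ carrying $S^{d-1}$ onto $\partial U$. Combined with Step 1, this exhibits $[S]$ as a topological $(d-1)$-sphere; and since $0 \in \mathrm{int}\,U$, the origin is in its inside. The only real obstacle is the coordinate-wise verification underlying Step 2; the rest is standard.
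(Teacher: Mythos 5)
Your overall plan --- realize $[S]$ as the boundary of $U:=\bigcup_{x\in K}Q(x)$, prove $U$ is star-shaped about the origin, and map it radially to a ball --- is essentially the paper's, and your Step 1 is a correct (indeed more detailed) elaboration of the paper's brief observation that $[S]=\partial U$. The gap is between your Step 2 and what Step 3 actually requires. Step 2 proves only \emph{weak} star-shapedness: you exhibit a \emph{closed} cube $Q(y)$ with $y\in K$ containing $tp$, so that $tp\in U$. Step 3, however, silently needs the \emph{strict} form --- that for $t\in[0,1)$ the point $tp$ lies in $\mathrm{int}\,U$ --- in two places. Lower semi-continuity of $R$ requires that $r\theta\in\mathrm{int}\,U$ for every $r<R(\theta)$, so that nearby rays also reach radius $r$; $r\theta\in U$ alone gives no such conclusion (a compact, weakly star-shaped set with $0$ in its interior can certainly have a discontinuous radial function). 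And the assertion that $\Phi$ carries $S^{d-1}$ \emph{onto} $\partial U$ is exactly the statement that no boundary point sits at radius strictly less than $R(\theta)$ on its ray, which again is precisely strict star-shapedness. You cannot simply sharpen your coordinatewise bound: when $tp_i$ is a half-integer there is no integer $y_i$ with $|y_i-tp_i|<\tfrac12$, so no single cube $Q(y)$ contains $tp$ in its interior.

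The paper proves the strict version directly: for $p\in U$ in the non-negative orthant it exhibits the \emph{open} cuboid $H:=\prod_i\bigl(-\tfrac12,\,p_i\vee\tfrac12\bigr)$, shows $H\subseteq U$ using downward-closure of $K$, and notes that $H$ contains the open segment from $0$ to $p$; since $H$ is open this places the segment in $\mathrm{int}\,U$. The paper also records explicitly a structural fact that your Step 3 relies on only implicitly: each plaquette lies in an affine hyperplane at half-integer offset, hence not through $0$, so any ray from the origin meets each plaquette at most once. With strict star-shapedness (and this hyperplane observation) supplied, the remainder of your Step 3 is sound, but as written the argument has a real hole precisely at the step you flag as ``standard.''
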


\begin{proof}
Let $U:=\bigcup_{x\in K} Q(x)$ be the union of the unit cubes
corresponding to $K$.   Note that $[S]$ is the topological
boundary of $U$ in $\R^d$.  Let
$\Sigma:=\{z\in\R^d:\|z\|_2=1\}$ be the unit sphere; we will
give an explicit homeomorphism between $[S]$ and $\Sigma$.

We claim first that $U$ is strictly star-shaped, which is to
say: if $x\in U$ then the line segment $\{\alpha
x:\alpha\in[0,1)\}$ is a subset of the topological interior of
$U$ (i.e., of $U\setminus [S]$). To check this, suppose without
loss of generality that $x$ is in the non-negative orthant
$[0,\infty)^d$. By the given properties of $K$, the open cuboid
$$H:=\prod_{i=1}^d \big(-\tfrac12\,,\, x_i\vee \tfrac12\big)$$
is a subset of $U$ (here it is important that the origin is at
the centre of a cube, rather than on a boundary); now, $H$
clearly contains the aforementioned line segment, and the claim
is proved. In the above, $x \vee y$ denotes the maximum of $x$
and $y$.

It follows that, for any point $z\in\Sigma$, the ray $\{\alpha
z:\alpha\in[0,\infty)\}$ has exactly one point of intersection
with $[S]$.  Denote this point of intersection $f(z)$.  Clearly
$f$ is a bijection from $\Sigma$ to $[S]$; we must prove that
it is a homeomorphism.  Since $\Sigma$ and $[S]$ are compact
metric spaces, it suffices to express them as finite unions
$\Sigma=\bigcup_{j=1}^r X_j$ and $[S]=\bigcup_{j=1}^r Y_j$,
where the $X_j$ and $Y_j$ are compact, and such that $f$
restricted to $X_j$ is a homeomorphism from $X_j$ to $Y_j$, for
each $j$. This is achieved by taking $\{Y_1,\ldots ,Y_r\}$
equal to the set of plaquettes $S$.  Any plaquette in $\Pi_d$
is a subset of some $(d-1)$-dimensional affine subspace
(hyperplane) of $\R^d$ that does not pass through $0$ (here the
offset of $\tfrac12$ is again important) and it is elementary
to check that the projection through $0$ from such a subspace
to $\Sigma$ is a homeomorphism to its image in $\Sigma$.

Finally, we must check that $0$ lies in the inside of the sphere
$[S]$; this is clear because $0\in U\setminus [S]$, and any
unbounded path in $\R^d$ starting from $0$ must leave $U$ at
some point, and thus must intersect $[S]$.
\end{proof}

The next lemma is closely related to a recent result on random
surfaces in \cite{ddghs}.  Consider the bond percolation model
with parameter $p$ on $\L_d$.   By a {\dof path} we mean a
self-avoiding path comprising sites in $\Z^d$ and bonds in
$\L_d$. Recall that $\sigma(k)$ is the number of paths starting
at the origin and having $k$ edges, and
$$
\mu_d:=\lim_{k\to\infty} \sigma(k)^{1/k}
$$
is the
connective constant.  Let $0=v_0,v_1,\ldots,v_k$ be the sites
(in order) of such a path.  We call the path {\dof good} if, for
each $i$ satisfying $\|v_{i-1}\|_1<\|v_{i}\|_1$, the bond with
endpoints $v_{i-1},v_{i}$ is occupied.

\begin{lemma}\label{radius}
Let $K$ be the set of sites $u\in\Z^d$ for which there is a
good path from $0$ to $u$.  If $p<\mu_d^{-2}$ then $K$ is a.s.\
finite, and moreover,
\begin{equation}\label{bound}
\P_p(\rad K\geq r)\leq C' \alpha^r, \qquad r\geq 0,
\end{equation}
for any $\alpha\in(\mu_d p,1)$, and some $C'=C'(p,d,\alpha
)<\infty$.  If $p<(2d-1)^{-2}$ then \eqref{bound} holds with
$\alpha=p(2d-1)$ and $C'=2/[1-p(2d-1)^2]$.
\end{lemma}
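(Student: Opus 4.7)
The plan is a union bound over self-avoiding paths, with the connective constant providing the quantitative input. I would begin with a simple parity/counting observation: along any self-avoiding path $0 = v_0, v_1, \ldots, v_k$, each step changes $\|\cdot\|_1$ by exactly $\pm 1$, so if $\|v_k\|_1 = \ell$ then the numbers of outward and inward steps must be $(k+\ell)/2$ and $(k-\ell)/2$ respectively; in particular $k \ge \ell$ and $k \equiv \ell \pmod 2$. Since the sites $v_0,\ldots,v_k$ are distinct, the $(k+\ell)/2$ outward bonds of the path are distinct, so by independence the event that this specific path is good has probability exactly $p^{(k+\ell)/2}$.

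Next I would reduce to paths with a prescribed endpoint distance. If $\rad K \ge r$, then some good path from $0$ reaches a site at distance $\ge r$; truncating it at its first visit to $\ell^1$-distance $r$ (which, by the unit-step property, happens at a site of distance exactly $r$) produces a good path from $0$ of some length $k = r + 2m$, $m \ge 0$, ending at distance exactly $r$, since an initial segment of a good path is still good. Bounding the number of such paths by $\sigma(r+2m)$, the union bound gives
\begin{equation*}
\P_p(\rad K \ge r) \;\le\; \sum_{m \ge 0} \sigma(r+2m)\, p^{r+m}.
\end{equation*}

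For the first assertion, given $\alpha \in (\mu_d p, 1)$ the hypothesis $p < \mu_d^{-2}$ yields $\mu_d < 1/\sqrt p$, so I may pick $\mu'$ with $\mu_d < \mu' \le \alpha/p$ and $(\mu')^2 p < 1$. By the definition of $\mu_d$ there is $C_0 = C_0(\mu') < \infty$ with $\sigma(k) \le C_0 (\mu')^k$ for all $k \ge 0$, and substituting and summing the geometric series in $m$:
\begin{equation*}
\P_p(\rad K \ge r) \;\le\; \frac{C_0}{1 - (\mu')^2 p}\, (\mu' p)^r \;\le\; \frac{C_0}{1-(\mu')^2 p}\, \alpha^r,
\end{equation*}
which also implies $\rad K < \infty$ almost surely, hence the a.s.\ finiteness of $K$.

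For the explicit bound under the stronger hypothesis $p < (2d-1)^{-2}$, I would replace the non-constructive estimate by the elementary $\sigma(k) \le 2(2d-1)^k$ (which follows from $\sigma(k) \le 2d(2d-1)^{k-1}$ for $k \ge 1$ together with $2d \le 2(2d-1)$ when $d \ge 1$, and is trivially true at $k=0$); running the previous computation with $\mu' = 2d-1$ produces exactly the claimed values $\alpha = p(2d-1)$ and $C' = 2/(1 - p(2d-1)^2)$. I do not anticipate any real obstacle: the entire content sits in the parity identity linking the length $k$, endpoint distance $\ell$, and the number $(k+\ell)/2$ of outward bonds that must be occupied, after which a routine union bound and geometric series complete the argument.
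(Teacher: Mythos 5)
Your proof is correct and follows essentially the same route as the paper: union-bound over good paths classified by their numbers of outward and inward steps (your $m$ is the paper's $B$, and your $(k+\ell)/2$ is its $A$), bound the probability of a fixed path being good by $p^{(k+r)/2}$, count paths by $\sigma(k)$, and sum the resulting geometric series using $\sigma(k)^{1/k}\to\mu_d$ (resp.\ the elementary bound $\sigma(k)\le 2(2d-1)^k$). The only cosmetic difference is that you make the truncation-to-first-exit step explicit, whereas the paper encodes it by defining $N(r)$ as the number of good paths ending on the $\ell^1$-sphere of radius $r$.
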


\begin{proof}
Let $N(r)$ be the number of good paths that start at $0$ and
end on the $\ell^1$-sphere $\{x\in\Z^d:\|x\|_1=r\}$.  Then,
$$
\P_p(\rad K\geq r)\leq\P_p( N(r)>0)\leq
\E_p N(r).$$ For any path $\pi$ with vertices
$0=v_0,v_1,\ldots,v_k=u$ with $\|u\|_1=r$, let
$$A:=\#\{i:\|v_i\|_1>\|v_{i-1}\|_1\};
\qquad B:=\#\{i:\|v_i\|_1<\|v_{i-1}\|_1\}
$$
be respectively the
number of steps Away from, and Back towards, the origin $0$.
Note that $k=A+B$, and $\|u\|_1=A-B$.  Thus, the probability
that $\pi$ is good is $p^A$, while the number of possible paths
having given values of $A$ and $B$ is at most $\sigma(A+B)$.
Hence,
\begin{equation}\label{sum}
\E_p N(r)\leq \sum_{\substack{A,B\geq 0:\\A-B=r}} \sigma(A+B) p^A
=\sum_{B\geq 0} \sigma(2B+r)p^{B+r}.
\end{equation}

For any $\epsilon>0$, we have $\sigma(k)\leq (\mu+\epsilon)^k$
for $k$ sufficiently large, where $\mu=\mu_d$. Therefore,
\eqref{sum} is at most
$$\sum_{B\geq 0} (\mu+\epsilon)^{2B+r}p^{B+r}=
\frac{[(\mu+\epsilon)p]^r}{1-(\mu+\epsilon)^2p}$$
provided $(\mu+\epsilon)^2p<1$ and $r$ is sufficiently large;
thus we can choose $C'$ so that the required bound
\eqref{bound} holds for all $r\ge 0$.

The claimed explicit bound in the case $p<(2d-1)^{-2}$ follows
similarly from \eqref{sum} using $\sigma(k)\leq
(2d)(2d-1)^{k-1}\leq 2 (2d-1)^{k}$.
\end{proof}

\begin{proof}[Proof of Theorem \ref{sphere}]
Couple the bond and plaquette percolation models by considering
$e$ to be occupied if and only $\pi(e)$ is occupied. Let
$p<\mu_d^{-2}$ and let $K$ be the random set of sites $u$ for
which there exists a good path from $0$ to $u$. By Lemma
\ref{radius}, $K$ satisfies \eqref{bound} with the given
constants $\alpha$, $C'$.

Since a good path may always be extended by a step towards the
origin (provided the new site is not already in the path), $K$
satisfies the condition that $x\in K$ and $y\preceq x$ imply
$y\in K$. Therefore, Proposition~\ref{boundary} applies.  If $e$
is a bond with exactly one endvertex in $K$, then by the
definition of a good path, it is the end closer to $0$ that is
in $K$, and $e$ must be unoccupied. Therefore, all plaquettes in
the set $S$ in \eqref{def-s} are unoccupied. Finally, the tail
bound in \eqref{bound} implies the bound in
Theorem~\ref{sphere} because $\rad [S]\leq \rad K+d/2$.
\end{proof}

\begin{proof}[Proof of Corollary \ref{ent}]
Couple the bond and plaquette models as usual. Let
$p<\mu_3^{-2}$, and let $S$ be the set of plaquettes from
Theorem \ref{sphere}. The sphere $[S]$ intersects no occupied
bond and has $0$ in its inside, hence it has $[E]$ in its
inside.
\end{proof}

\section{Critical values}\label{sec:crit}

We consider next the critical value of $p$ for the event of
Theorem~\ref{sphere}, namely the event that there exists a
finite set $S$ of unoccupied plaquettes whose union $[S]$ is a
sphere with $0$ in its inside. In so doing, we shall make use
of the definition of a good path from Section \ref{sec:pfs}. We
shall frequently regard $\Z^d$ as a graph with bond-set $\L_d$.
A directed path of $\Z^d$ is called {\dof oriented} if every
step is in the direction of increasing coordinate-value.

Let $d\ge 2$, and (as after Theorem \ref{sphere})
declare a bond of $\L_d$ to be {\dof occupied}
with probability $p$. We write $v \too w$
(\resp, $v \too \oo$) if there exists an
oriented occupied path from $v$ to $w$ (\resp, an
infinite oriented occupied path from $v$).
Let $\pco(d)$ denote the critical probability of oriented percolation on $\Z^d$.

Let
$\tg(p)$ be the probability that there exists an infinite good path
beginning at the origin. Since $\tg$ is a non-decreasing function,
we define a critical value
$$
\pg:= \sup\{p: \tg(p)=0\}.
$$
Note that
\begin{equation}\label{h2}
\mu_d^{-2} \le \pg \le \pco(d).
\end{equation}
That $\mu_d^{-2} \le \pg$
follows by Theorem \ref{sphere}; the second inequality $\pg\le
\pco(d)$ holds since every oriented occupied path from $0$ is
necessarily good.

For $x=(x_1,x_2,\dots,x_d)\in\Z^d$, let
$$
s(x):=\sum_{i=1}^d x_i,
$$
and let
\begin{gather*}
H_n:=\{x\in \Z^d: s(x)=n\};\ H:=\{x\in\Z^d: s(x)\ge 0\};\\
\ H_+ := H\sm H_0.
\end{gather*}

A finite or infinite path $v_0,v_1,\dots$ is called {\dof\adm}
if, for each $i$ satisfying $s(v_{i-1}) < s(v_i)$, the bond
with endpoints $v_{i-1},v_i$ is occupied. If there exists an
admissible path from $x$ to $y$, we write $x \toa y$; if such a
path exists using only sites in some set $S$, we write $x\toa
y$ in $S$.

Let $e:=(1,1,\dots,1)$ and
$$
R := \sup\{n \ge 0: 0 \toa ne\}.
$$
Let $\ta(p):=\P_p(R=\oo)$, with associated critical value
$$
\pa:=\sup\{p: \ta(p)=0\}.
$$
By the definition of admissibility,
\begin{equation}\label{omh5}
\ta(p)=\P_p(\forall x\in \Z^d,\, 0\toa x),
\end{equation}
and indeed the associated events are equal.

If $x \toa y$ by an admissible path using only sites of $H_+$
except possibly for the first site $x$, we write $x\toah y$. We
write $x\toah \oo$ if $x$ is the endvertex of some infinite
admissible path, all of whose vertices except possibly $x$ lie
in $H_+$. Let $ \tah(p) =\P_p(0 \toah \oo),$ with associated
critical value
$$
\pah:= \sup\{p: \tah(p)=0\}.
$$
Also define the orthant
$$
K:= \{x\in\Z^d: x_i \ge 0 \text{ for all } i\}.
$$
Let $\ta^K(p)$ be the probability of an infinite admissible
path in $K$ starting at $0$, and let $\pa^K$ be the associated
critical value. Since $K \subseteq H_+ \cup \{0\}$, we have
$\tah\ge \ta^K$ and $\pah\le \pa^K$.

\begin{thm}\label{thm:critpt}
For $d \ge 2$ we have $\pa \le \pah=\pa^K$.
\end{thm}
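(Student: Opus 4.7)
\emph{Plan.} To prove Theorem \ref{thm:critpt}, I must establish the two nontrivial inequalities $\pa \le \pah$ and $\pa^K \le \pah$; the remaining inequality $\pah \le \pa^K$ is immediate from the inclusion $K \subseteq H_+ \cup \{0\}$ and is already noted in the text.

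For $\pa \le \pah$, I would show $\tah(p) > 0 \Rightarrow \ta(p) > 0$. By \eqref{omh5}, $\ta(p) = \P_p(\forall x \in \Z^d,\, 0 \toa x)$. The structural fact driving the argument is that admissibility imposes no constraint on $s$-decreasing steps: from any admissibly-reachable vertex $v \in K$, one may freely append a monotone coordinate-decreasing walk to reach any $x$ with $0 \preceq x \preceq v$, since each such step decreases $s$ and so requires no occupancy. Combined with direct $s$-decreasing paths from $0$ to vertices outside $K$, this reduces matters to showing that, whenever $\tah(p) > 0$, with positive probability the set of vertices admissibly reachable from $0$ contains, for every $N$, some vertex of $K$ with all coordinates $\ge N$. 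I would aim to establish this via a renewal argument along the diagonal direction $e$, exploiting translation invariance together with the fact that $\tah(p) > 0$ yields infinite admissible paths from every vertex in its own upper half-space.

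For the subtler $\pa^K \le \pah$, I would condition on the event $\{0 \toah \oo\}$ and examine the first step of the path, which must land at $e_i$ for some $i \in \{1,\dots,d\}$. By the coordinate-permutation symmetry of the model we may assume $i = 1$, at the cost of a factor $1/d$. The plan is then to show, via a coupling or block renormalization argument at large scales, that the subsequent admissible path may be chosen so as to remain in the orthant $K$ with positive probability. The main obstacle is exactly this step: reflections across coordinate hyperplanes $\{x_j = 0\}$ change the value of $s$ and hence do not preserve $H_+$, so the naive reflection argument is unavailable, and a more delicate coupling is required that combines translation invariance parallel to $H_0$ with the free nature of $s$-decreasing steps to force admissible structures into $K$.
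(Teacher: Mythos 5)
Your decomposition is the same as the paper's: you correctly note $\pah \le \pa^K$ is immediate, and that the content lies in showing $\pa^K \le \pah$ and $\pa \le \pah$. You also correctly isolate the key structural obstacle in the former step (reflections across $\{x_j=0\}$ do not preserve $s$, so naive unfolding fails), and your reduction of $\pa\le\pah$ --- it suffices to reach, admissibly from $0$, vertices $z$ in the orthant $K$ with all coordinates arbitrarily large, since from such a $z$ any $x$ with $|x_i|\le z_i$ is reachable by $s$-decreasing steps --- is exactly the right geometric idea and mirrors the paper's use of the cone $K_{a,b}$.

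However, both central steps in your proposal remain plans rather than arguments, and the plans as sketched do not close. A ``renewal argument along the diagonal'' does not by itself steer admissible paths into a cone around $e$; the very obstruction you identify in the $\pa^K\le\pah$ step (no reflection symmetry compatible with $s$) is what prevents such a soft coupling from working for $\pa\le\pah$ as well. The paper handles both inequalities in one stroke by invoking Proposition~\ref{lem4}(b): for $p>\pah$, the cone $K_{a,b}$ (with $0\le a\le 1< b\le\oo$) a.s.\ contains an infinite admissible path, and $\P_p(v\toa\oo \text{ in } K_{a,b})>0$ for $v\in I(K_{a,b})$. Taking $a=0$, $b=\oo$ gives $\ta^K(p)>0$ directly, hence $p\ge\pa^K$; taking $0<a\le 1<b<\oo$, the resulting path visits sites $z$ with $z\ge x$ for every $x\in\Z^d$, whence by \eqref{omh5} one gets $\ta(p)>0$, i.e.\ $p\ge\pa$. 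Proposition~\ref{lem4}(b) is precisely the renormalization input your plan acknowledges is needed but does not supply; the paper in turn defers its proof to the dynamic-renormalization machinery of \cite{ghiemer,bgn,gm}. So the gap in your proposal is not a wrong idea but a missing theorem: to complete the argument you would have to prove (a version of) Proposition~\ref{lem4}(b), which is a substantial piece of work, not a coupling one can improvise.
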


Since every admissible path in the orthant $K$ is good, we have
that $\pg\le \pa^K$, and therefore $\pg\le \pah$ by Theorem
\ref{thm:critpt}. We pose two questions.
\begin{question}\label{q1} For $d\geq 3$, is it the case that
$\pa=\pah$?
\end{question}
\begin{question}\label{q2}
For $d\geq 3$, is it the case that $\pg=\pah$?
\end{question}
These matters are resolved as follows when $d=2$.

\begin{thm}\label{thm:2d}
For $d=2$ we have $\pg=\pah=\pa=1-\pco(2)$.
\end{thm}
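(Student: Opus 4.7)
The plan is to sandwich each of the three critical values between $1-\pco(2)$ on both sides, using the inequalities already in hand together with a planar-duality argument specific to $d=2$. From Theorem~\ref{thm:critpt} we have $\pa \le \pah = \pa^K$, and from the paragraph following its statement we have $\pg \le \pah$. Hence it suffices to prove the upper bound $\pah \le 1-\pco(2)$ and the two lower bounds $\pa \ge 1-\pco(2)$ and $\pg \ge 1-\pco(2)$; the three asserted equalities then follow by squeezing.

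For the upper bound, I would assume $p > 1-\pco(2)$, so that the Bernoulli$(1-p)$ configuration of closed bonds is subcritical for oriented percolation. Rotate so that $e=(1,1)$ is vertical and each $H_n$ is a horizontal row. The plan is to argue, via planar duality on $\Z^2$, that blocking an admissible excursion from $0$ in $H_+$ would require an infinite oriented closed-bond path on the dual lattice cutting across the strip, and that this dual oriented closed-bond process has critical value $\pco(2)$ by the rotational self-symmetry of $\Z^2$. Since $1-p<\pco(2)$, no such blocker exists almost surely, and a level-by-level exploration then produces an infinite admissible path from $0$ in $H_+$, giving $\tah(p)>0$.

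For the two lower bounds, assume $p<1-\pco(2)$, so $1-p>\pco(2)$ and the closed bonds are supercritical for oriented percolation. By translation invariance and ergodicity, with probability one there is an infinite oriented closed-bond path sitting above any fixed height in the rotated picture; such a path forms a barrier preventing all admissible excursions from $0$ to points of arbitrarily large $s$, so $R<\infty$ a.s.\ and hence $\ta(p)=0$, giving $\pa\ge 1-\pco(2)$. The same blocking argument is run separately in each of the four orthants, using the $D_4$ symmetry of $\Z^2$: any infinite good path from $0$ visits some orthant infinitely often, its restriction there is an infinite admissible path inside that orthant, and it is blocked by the analogous supercritical closed-bond barrier. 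This yields $\pg\ge 1-\pco(2)$.

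The principal difficulty is the planar-duality step in the upper bound: one has to verify that the only obstruction to admissible upward motion in $d=2$ is an infinite oriented closed-bond path on the dual lattice with the correct orientation, and that the dual oriented closed-bond process inherits the primal critical value $\pco(2)$ from the rotational self-symmetry of $\Z^2$. It is precisely this planar-duality identification that is unavailable in $d\ge 3$, which is the conjectured reason why Questions~\ref{q1} and~\ref{q2} remain open there.
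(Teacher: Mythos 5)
Your overall plan of sandwiching all three critical values between $1-\pco(2)$, using $\pa\le\pah$ and $\pg\le\pah$ together with two-dimensional duality, is the paper's strategy, and the duality ideas behind $\pah\le 1-\pco(2)$ and $\pa\ge 1-\pco(2)$ are in the right spirit. However, both need sharpening: the dual obstruction to $0\toah\oo$ is a \emph{finite} directed open dual path from the diagonal $D^+$ to the diagonal $D^-$ inside $H$, not an infinite one, and for $1-p<\pco(2)$ its absence occurs only with \emph{positive} probability, not almost surely; and the barrier needed for $\pa\ge 1-\pco(2)$ must be a \emph{doubly}-infinite directed open dual path intersecting the positive $y$-axis, whose a.s.\ existence the paper proves via a zero--one event and the Harris--FKG inequality, not merely by ``translation invariance and ergodicity''.

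The genuine gap is in your argument for $\pg\ge 1-\pco(2)$. You assert that an infinite good path visits some orthant infinitely often and that ``its restriction there is an infinite admissible path inside that orthant,'' which is then blocked by a barrier inside that orthant. But the restriction of a good path to one orthant is a union of disjoint finite segments, not a single admissible path emanating from $0$, and a barrier confined to one orthant can be circumvented: the good path may exit the orthant, proceed outward through the adjacent orthants to large $\ell^1$-distance, and re-enter the original orthant \emph{beyond} the barrier. Four separate orthant-barriers (or half-plane barriers) do not enclose the origin. What is needed is an obstruction that encircles $0$. The paper constructs, inside each annulus $B(n)\sm B(\tfrac13 n)$, a directed open dual \emph{cycle} around the origin by patching together directed open dual crossings of the four quadrant-annuli (each with probability bounded below uniformly in $n$, via Proposition~\ref{lem4}(a)) using four corner dual bonds and Harris--FKG, giving probability at least $(1-p)^4\beta^4$ of a blocking cycle at every scale. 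At infinitely many disjoint scales this occurs a.s., and any good path from $0$ must cross such a cycle with $\|\cdot\|_1$ strictly increasing at that step, which is impossible since the corresponding primal bonds are unoccupied. This circuit construction is the missing ingredient.
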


In advance of the proofs, we present a brief discussion of Open
Question~\ref{q1} above. By Proposition \ref{lem4}(b) below, one has that
$$
\lim_{n\to\oo} \P_p(0 \toah H_n) \begin{cases} =0 &\text{ if } p<\pah,\\
>0 &\text{ if } p>\pah.
\end{cases}
$$
Now,
\begin{align*}
\P_p(0 \toa ne) &\le \sum_{x\in H_0} \P_p(x \toah ne)\\
&=\sum_{x\in H_0} \P_p(0 \toah ne-x)
= \sum_{x\in H_n} \P_p(0\toah x).
\end{align*}
If one could prove that
$$
\sum_{x\in H_n} \P_p(0\toah x) \to 0\q\text{as}\q n\to\oo,
$$
whenever $p<\pah$, it would follow by Theorem \ref{thm:critpt}
that $\pa=\pah$. This is similar to the percolation problem
solved by Aizenman--Barsky and Menshikov, \cite{AB,Men1,Men0}
(see also \cite[Chap.\ 5]{g2}). It seems possible to adapt
Menshikov's proof to prove an exponential-decay theorem for
{\em admissible} paths (as in \cite[Thm 2]{gh4}), but perhaps not for admissible
connections {\em restricted to} $H$.

The proofs of the two theorems above will make use of the
next proposition. Let $d \ge 2$ and $0\le a < b \le \oo$. Define the
cone $K_{a,b}$ to be the set of sites $x=(x_1,x_2,\dots,x_d)\in
\Z^d$ satisfying:
\begin{equation}\label{h6}
x_1\ge 0,\q\text{and } ax_1 \le x_j\le bx_1\text{ for } j=2,3,\dots,d.
\end{equation}
The subgraph of $\Z^d$ induced by $K_{a,b}$ comprises a unique infinite component, denoted
$I(K_{a,b})$, together with a finite number of finite components.
This may be seen as follows. The set of $x \in K_{a,b}$ with $x_1=k$
is the set $S_k$ of all integer-vectors
belonging to $\{k\}\times[ak,bk]^{d-1}$. Each  $S_k$ is connected, and,
for sufficiently large $k$,  $S_k$ is connected to $S_{k+1}$.

\begin{prop}\label{lem4}
Let $d \ge 2$ and $0\le a \le 1 < b \le \oo$.
\begin{letlist}
\item If $p > \pco(d)$, then
$$
\P_p(K_{a,b} \text{ \rm contains some infinite oriented occupied path}) = 1,
$$
and for all $v\in I(K_{a,b})$,
$$
\P_p(v\too \oo \text{ \rm in } K_{a,b}) >0.
$$
\item If $p > \pah$, then
$$
\P_p(K_{a,b} \text{ \rm contains some infinite admissible path}) = 1,
$$
and for all $v\in I(K_{a,b})$,
$$
\P_p(v\toa \oo \text{ \rm in } K_{a,b}) >0.
$$
\end{letlist}
\end{prop}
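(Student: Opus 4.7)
Both parts have the same structure; I describe (a) in detail and outline the modification for (b). The starting observation is that $K_{a,b}$ is closed under coordinate-wise addition: $av_1 \le v_j \le bv_1$ and the analog for $w$ yield $a(v_1+w_1) \le v_j + w_j \le b(v_1+w_1)$. In particular, $e := (1,\ldots,1)$ lies in $K_{a,b}$ (since $a \le 1 < b$), and $v + K_{a,b} \subseteq K_{a,b}$ for every $v \in K_{a,b}$; hence the ray $\{v+ke\}_{k\ge 0}$ lies in $K_{a,b}$.

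The crux is to show that for $p > \pco(d)$ there exists $v_0 \in I(K_{a,b})$ with $\P_p(v_0 \too \oo \text{ in } K_{a,b}) > 0$. I would establish this by a Grimmett--Marstrand style block renormalization: for $p$ strictly above $\pco(d)$ and scale $N$ large enough, a suitable ``good block'' event has probability close to $1$, depends on bonds in a bounded region, and dominates supercritical Bernoulli oriented percolation on the coarse-grained lattice. Placing the coarse origin deep inside $K_{a,b}$ and orienting good-block chains along the $e$-direction, the resulting infinite microscopic oriented occupied path has fluctuations $o(n)$ about the $e$-ray, and so (since $a \le 1 < b$) stays inside $K_{a,b}$. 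To extend to arbitrary $v \in I(K_{a,b})$, one connects $v$ to such a $v_0$ by a finite oriented lattice path within $K_{a,b}$, constructed by a careful ordering of $+e_j$ steps (exploiting that the $e$-ray from $v$ lies in $K_{a,b}$); forcing the bonds of this finite path to be occupied (an event of positive probability depending on finitely many bonds) and combining via FKG with $\{v_0 \too \oo \text{ in } K_{a,b}\}$ then yields $\P_p(v \too \oo \text{ in } K_{a,b}) > 0$.

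For the almost-sure assertion, let $A$ be the event that $K_{a,b}$ contains some infinite oriented occupied path, and let $T$ denote lattice translation by $e$. The inclusion $e + K_{a,b} \subseteq K_{a,b}$ is equivalent to $K_{a,b} \subseteq T^{-1}K_{a,b}$, which forces $T^{-1}A \supseteq A$. Translation invariance of $\P_p$ gives $\P_p(T^{-1}A) = \P_p(A)$, so $T^{-1}A = A$ modulo null sets; ergodicity (in fact mixing) of the shift $T$ on the Bernoulli product measure then yields $\P_p(A) \in \{0,1\}$, and the previous paragraph forces $\P_p(A) = 1$. Part (b) follows by the same scheme, with ``oriented occupied'' replaced by ``admissible'' and $\pco(d)$ replaced by $\pah$. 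The main obstacle is the renormalization step: for oriented percolation above $\pco(d)$ it is fairly standard and relies on coordinate-permutation symmetry to place $e$ in the interior of the asymptotic shape, but for admissible percolation above $\pah$ the block argument must be adapted to paths containing free backward ($s$-decreasing) steps, while still confining the macroscopic direction to a narrow neighborhood of $e$.
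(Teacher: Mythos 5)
The paper deliberately omits the proof of this proposition, stating that it would require extensive duplication of dynamic-renormalization arguments from the literature; what it supplies instead is a set of pointers (to Grimmett--Hiemer for oriented percolation within slabs and sub-cones, to the ``steering'' constructions of Barsky--Grimmett--Newman and Grimmett--Marstrand, to a modified box $B_{l,k}$ for part (b), and to sprinkling), together with Lemma~\ref{lem3} as the two-dimensional seed for the renormalization. Your outline follows the same renormalization scheme, and your elementary observation that $K_{a,b}$ is closed under addition --- whence $e+K_{a,b}\subseteq K_{a,b}$ --- combined with the ergodicity argument for the shift $T$ by $e$ ($A\subseteq T^{-1}A$ with equal probabilities, hence $A$ is $T$-invariant mod null sets) is a correct and tidy way to upgrade a positive-probability statement to an almost-sure one.

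That said, the parts you gloss over are exactly where the work lies, and on a couple of points the outline is loose enough to count as a gap. The assertion that a ``good block'' event has probability close to $1$ and dominates a supercritical Bernoulli oriented percolation, and that the microscopic path has fluctuations $o(n)$ about the $e$-direction, is not a consequence of anything in your sketch: it is the entire content of the block construction, and in the oriented setting it rests on the Grimmett--Hiemer result (a.s.\ existence of an infinite oriented occupied path in a two-dimensional slab $S\times[-A,A]^{d-2}$ for $p>\pco(d)$) together with steering to bend the chain into $K_{a,b}$; the paper also supplies Lemma~\ref{lem3} precisely because a high-density seed on 2D sub-cones is needed to start this renormalization. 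The finite oriented connection from an arbitrary $v\in I(K_{a,b})$ to the seed needs more care than you give it: you must connect $v$ not to $v_0$ but to a translate $v_0+ke$ (since $v\le v_0$ need not hold), and when $a=1$ the $e$-ray lies on the boundary of the cone, so not every ordering of $+e_j$ steps stays inside $K_{a,b}$; the ``careful ordering'' you invoke (always advance a currently-lagging coordinate) does work but should be proved. Finally, for part (b) you correctly flag that free backward ($s$-decreasing) steps complicate the block argument, but the paper additionally signals that a sprinkling device, as in Grimmett--Marstrand, is used to glue the pieces; your outline does not address this.
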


The remainder of this section is set out as follows. First, we
deduce Theorems \ref{thm:critpt} and \ref{thm:2d} from Proposition
\ref{lem4}. The proof of Proposition \ref{lem4} is not presented in
this paper, since it would be long and would repeat many
constructions found elsewhere. Instead, this section ends with
some comments concerning that proof.

\begin{proof}[Proof of Theorem \ref{thm:critpt}]
As noted before the statement of Theorem \ref{thm:critpt}, $K
\subseteq H_+ \cup\{0\}$, whence $\pah\le \pa^K$. Let $p>\pah$.
By Proposition \ref{lem4}(b), we have
$\ta^K(p)>0$, so that $p\ge \pa^K$. Therefore, $\pah=\pa^K$.

There is more than one way of showing $\pa\le\pah$, of which
the following is one. Let $d \ge 3$; the proof is similar when
$d=2$.  Let $p>\pah$ and fix $0<a \le 1<b<\oo$ arbitrarily. By Proposition
\ref{lem4}(b), $K_{a,b}$ contains a.s.\ some infinite
admissible path $\pi$. Any infinite path $\pi$ in $K_{a,b}$ has
the property that, for all $x\in \Z^d$, there exists $z\in \pi$
with $x \le z$ (in that $x_i\le z_i$ for every coordinate $i$).
By \eqref{omh5}, $\ta(p)>0$, so that $p\ge \pa$ and $\pah\ge
\pa$ as claimed.
\end{proof}

\begin{proof}[Proof of Theorem \ref{thm:2d}]
We shall make extensive use of two-dimensional duality.  We
call the graph $\Z^2$ the {\dof primal lattice}, and we call
the shifted graph $\Z^2+(\tfrac12,\tfrac12)$ the {\dof dual
lattice}.  Thus, the dual bonds are precisely the plaquettes of
$\Pi_2$.  Recall that a dual bond is declared occupied if and
only if the primal bond that crosses it is occupied.  For
consistency with standard terminology, we now call a dual bond
{\dof open} if and only if it is {\em unoccupied} (so a dual
bond is open with probability $q:=1-p$). We assign directions
to dual bonds as follows: a horizontal dual bond is directed
from left to right, and a vertical bond from top to bottom.

Consider the sets
\begin{align*}
D^+ &:=\{(-u,u)+(-\tfrac12,\tfrac12): u \ge 0\},\\
D^- &:=\{(u,-u)+(\tfrac12,-\tfrac12): u \ge 0\},
\end{align*}
of dual sites. The primal origin $0$ lies in some infinite admissible path in
$H^+\cup\{0\}$ if and only if no site of $D^+$ is connected by an directed
open dual path of $H$ to some site of $D^-$ (see Figure \ref{diag}). If $1-p
< \pco$ (\resp, $1-p
> \pco$) the latter occurs with strictly positive probability (\resp,
probability $0$). Therefore, $\pah = 1-\pco$.

\begin{figure}
\centering\includegraphics[width=4.3cm]{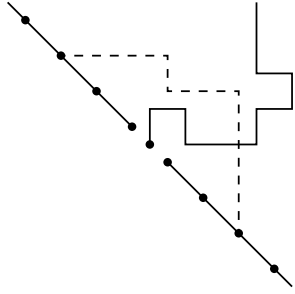}
\caption{The diagonal lines $D^+$ (top left) and $D^-$ (bottom right),
a directed dual path (dashed) in $H$ from $D^+$ to $D^-$, and a primal path
(solid) in $H^+$ from the origin (centre).}\label{diag}
\end{figure}

We show next that $\pg=\pah$. Since $\pg\le\pah$, it suffices to show that
$\pg \ge 1-\pco$. Let $p<1-\pco$, so that $1-p>\pco$. We shall prove the
required inequality $p\le \pg$. Define the set of dual sites
$$
Q:=\bigl\{(x,y)+(\tfrac12,\tfrac12):x,y\in\Z, \; x,y\geq 0\bigr\}.
$$
Let $B(k) \subseteq Q$ be given by $B(k) := ([0,k]\cap\Z)^2 +
(\tfrac12,\tfrac12)$. For $n \ge 0$, let $C_n$ be the event that there
exists a directed open dual path from $v_n:=(0,n)+(\tfrac12,\tfrac12)$
to $w_n:=(n,0)+(\tfrac12,\tfrac12)$ lying entirely within the region
$B(n) \sm B(\frac13 n)$. We claim that there exists $\b>0$ such that
\begin{equation}\label{h5}
\P_p(C_n) \ge \b, \qq n \ge 1,
\end{equation}
and the proof of this follows.

Let $V_n$ be the event that there exists a directed open dual path from $v_n$
to the line $\{(n,k)+(\frac12,\frac12): 0\le k\le n\}$ lying entirely within
the cone $\{(x,y)+(\frac12,\frac12): 0 \le n-y \le x,\ x \ge 0\}$; let $W_n$
be the event that such a path exists to $w_n$ from some site on the line
$\{(k,n)+(\frac12,\frac12): 0\le k\le n\}$, this path lying entirely within
$\{(x,y)+(\frac12,\frac12): n-x\le y < \oo,\ x \le n\}$. By Proposition \ref{lem4}(a),
$\P_p(V_n)\geq \delta$ for some $\delta=\delta(p)>0$ not depending on $n$. By reversing the directions of all
dual bonds, we see that $\P_p(W_n)=\P_p(V_n)$. On the event $V_n \cap W_n$,
there exists a directed open dual path of $B(n) \sm B(\frac13 n)$ from $v_n$
to $w_n$, and hence, by the Harris--FKG inequality,
$$
\P_p(C_n) \ge \P_p(V_n)\P_p(W_n) \geq \delta^2,
$$
as required for \eqref{h5}.

By considering corresponding events in the other three
quadrants of $\Z^2$ (with appropriately chosen
bond-orientations), we conclude that each annulus of $\R^2$
with inner (\resp, outer) $\ell^\oo$-radius $\frac13 n$
(\resp, $n+\tfrac12$) contains, with probability at least
$(1-p)^4\b^4$, a dual cycle  blocking good paths from the
origin. It follows that $p\le \pg$ as required.

Finally we show that $\pa=1-\pco$. Since $\pa\le\pah$ by
Theorem \ref{thm:critpt}, we have only to show that $\pa\ge
1-\pco$. This follows by \eqref{omh5} and the fact that, when
$q=1-p>\pco$, there exists $\P_p$-a.s.\ a doubly-infinite
directed open dual path intersecting the positive $y$-axis.
Here is a proof of the latter assertion. Let $\psi(q)$ be the
probability that there exists an infinite oriented path from
the origin in oriented percolation with density $q$. By
reversing the arrows in the fourth quadrant, the probability
that $0$ lies in a doubly-infinite directed open path of $\Z^2$
is $\psi^2$. The event
$$
J:=\{\text{there exists a doubly-infinite
open dual path}\}
$$
is a zero--one event and $\P_p(J)\ge \psi^2$, so that
$\P_p(J)=1$. Let $J^+$ (\resp, $J^-$) be the event that such a
path exists and intersects the positive (\resp, the
non-positive) $y$-axis. By reversing the directions of bonds,
we have that $\P_p(J^+) = \P_p(J^-)$. By the Harris--FKG
inequality,
$$
0=\P_p(\comp J)
= \P_p(\comp {J^+} \cap \comp{J^-})
\ge \P_p(\comp{J^+})\P_p(\comp{J^-}) = \P_p(\comp{J^+})^2,
$$
so that $\P_p(J^+)=1$.
\end{proof}

Proposition \ref{lem4} may be proved
by the dynamic-renormalization arguments developed for percolation
in \cite{bgn,gm}, for the contact model in \cite{bg},
and elaborated for directed percolation in \cite{ghiemer}. An account of
dynamic renormalization for percolation may be found in \cite{g2}.
The proof of Proposition \ref{lem4} is omitted, since it requires no novelty
beyond the above works, but extensive duplication of material therein.
The reader is directed mainly to \cite{ghiemer}, since the
present proposition involves a model in which the edge-orientations are
important. The method yields substantially more than
the statement of the proposition, but this is not developed here.

We highlight several specific aspects of the proof of Proposition \ref{lem4}, since they involve minor
variations on the method of \cite{ghiemer}.
First, since Proposition \ref{lem4} is concerned with admissible
paths in sub-cones of the orthant $K$, we require a
straightforward fact about oriented percolation on sub-cones of $\Z^2$,
namely that the associated critical probability is strictly
less than $1$. This weak statement leads via renormalization to
the stronger Proposition \ref{lem4}.
The following lemma is slightly stronger than
the minimum needed for the proof of Proposition
\ref{lem4}, and the proof is given at the end of this section.

\begin{lemma}\label{lem3}
Let $0 \le a < b \le \oo$, and let $K_{a,b}$ be the cone of $\Z^2$
containing all sites $(x,y)$ with $ax \le y \le bx$ and $x \ge 0$.
There exists $\eps=\eps_{a,b}>0$ such that: if $p>1-\eps$,
$$
\P_p(K_{a,b} \text{ \rm contains some infinite oriented occupied path})=1,
$$
and for all $v\in I(K_{a,b})$,
$$
\P_p(v\too\oo \text{ \rm in } K_{a,b}) > 0.
$$
\end{lemma}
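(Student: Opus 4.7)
The plan is a block renormalisation combined with a Peierls-type contour argument at the renormalised scale. Since $K_{a',b'}\subseteq K_{a,b}$ whenever $a\le a'$ and $b'\le b$, I may assume $0<a<b<\oo$; shrinking the cone only makes the lemma harder, so there is no loss of generality.

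Fix a large integer $N$ and partition the first quadrant into $N\times N$ blocks $B_{i,j}:=[iN,(i+1)N]\times[jN,(j+1)N]\cap\Z^2$ for $i,j\ge 0$. Call $(i,j)$ \emph{admissible} if $B_{i,j}\subseteq K_{a,b}$; for $i$ large, the admissible set contains a 2-dimensional discrete sub-cone, since the number of admissible $j$-values at column $i$ grows linearly in $i$ with slope $(b-a)/N>0$. Declare $B_{i,j}$ \emph{good} if every edge with both endpoints in $B_{i,j}$ is occupied; this event depends on at most $2N(N+1)$ edges, so $\P_p(B_{i,j}\text{ good})=p^{2N(N+1)}\to 1$ as $p\to 1$, and good events for distinct blocks are independent. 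A good block contains an oriented occupied path from every site on its left/bottom boundary to every site on its right/top boundary, so two oriented-adjacent good blocks can be concatenated into a single oriented occupied crossing.

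It then suffices to show that, for $q$ close to $1$, Bernoulli site percolation at density $q$ on the admissible-index cone has an infinite oriented open cluster almost surely, and that every site in the unique infinite component of this discrete cone belongs to the open cluster with positive probability. I would establish this by a Peierls argument in the renormalised cone: failure of an admissible site to be joined to infinity along an oriented open path forces the existence of a closed dual contour of some length $r$ separating its forward cone from infinity, and a union bound over contours (whose number grows at most like $C\mu^r$ with $\mu$ bounded by the connective constant of $\Z^2$) gives probability at most $C\mu^r(1-q)^r$, exponentially small in $r$ provided $q>1-1/\mu$.

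The second assertion would then follow by a finite-surgery argument: any $v\in I(K_{a,b})$ is joined within $K_{a,b}$ to some vertex of a fixed admissible block by a finite oriented path, and the event that this path is occupied and that the block lies on an infinite chain of good blocks has positive probability, being the intersection of two increasing events depending on disjoint edge sets. The main obstacle is the Peierls step in the cone's restricted geometry: the sloped boundaries require care in defining dual contours (and in verifying that ``no infinite forward open cluster'' is indeed implied by existence of a blocking dual contour inside the admissible cone), but since only $p$ close to $1$ is needed, rather than a sharp threshold, crude contour counting suffices.
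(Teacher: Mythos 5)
Your proposal shares the renormalisation philosophy of the paper's proof but takes a genuinely different route, and the difference matters. The paper's key device is to choose two lattice vectors $R,S$ that point into the cone (by picking rational slopes $r,s$ with $a<r<s<b$ and a suitable ratio $\beta/\alpha$), and then to build a small oriented tree $\pi$ joining $0$ to both $R$ and $S$ whose translates $v+iR+jS+\pi$ are edge-disjoint. Declaring $v_{i,j}$ black when all edges of $v_{i,j}+\pi$ are open yields an \emph{i.i.d.}\ field on $\{(i,j):i,j\ge0\}$ that dominates supercritical oriented site percolation on the full quarter-lattice; the cone geometry is thereby absorbed into the choice of $R,S$ and the renormalised model lives on a standard, boundaryless index set. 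Your axis-aligned $N\times N$ blocks do not achieve this: the admissible block indices still form a cone, so you are left needing a Peierls/contour argument \emph{for oriented percolation restricted to a cone}, which is exactly the issue you flag as ``the main obstacle'' but do not carry out. That step is plausible but delicate — a blocking contour can follow the cone boundary for free, so one must argue that its closed portion nonetheless grows with its extent, and one must formulate the dual contour correctly in the presence of sloped boundary — and it is precisely the work the paper's skew-block construction is designed to avoid. There is also a minor error: your good-block events are not independent as stated, since adjacent blocks $B_{i,j}$ and $B_{i+1,j}$ share the $N$ vertical edges along their common face; this is fixable (half-open blocks plus a finite-range-dependence comparison, or stochastic domination), but as written the independence claim is false. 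In short, the paper's $R,S$ construction is the missing idea that turns your sketch into a proof without a cone Peierls argument; with that replaced, your outline is sound, but the proposal as it stands leaves the central analytic step as an acknowledged gap.
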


Our second remark is concerned with  part (a) of  Proposition \ref{lem4}.
It is proved in \cite{ghiemer} that, when $p>\pco(d)$, there exists
a.s.\ an infinite oriented occupied path within some two-dimensional
slab of $\Z^d$ of the form $S \times[-A,A]^{d-2}$  for some $A<\oo$ and some sub-cone $S$
of the orthant $[0,\oo)^2$. In order to build such a path within $K_{a,b}$, one adapts
the construction of \cite{ghiemer} using the so-called `steering' arguments of
\cite{bgn,bg,g2,gm}.

Our final two remarks are concerned with part (b) of  Proposition \ref{lem4}.
For $y \in H$, write
$$
\down y = \{x\in H_0 : x\le y\},
$$
where $x\le y$ means that no coordinate of $x$ exceeds that of $y$. Let $J_l =\{x\in H_0: \|x\|_\oo \le l\}$.
The box $B_{L,K}$ of \cite[Sect.\ 4]{ghiemer} is replaced by
$$
B_{l,k} := \{y\in H: s(y) \le k,\, \down y \subseteq J_l \},
$$
with an amended version of \cite[Lemma 4.1]{ghiemer}.
Finally, the current proof uses the technique known as `sprinkling',
in a manner similar to that of the proofs
presented in  \cite{gm} and \cite[Sect.\ 7.2]{g2}.

\begin{proof}[Proof of Lemma \ref{lem3}]
Let $r=r_2/r_1$, $s=s_2/s_1$ be rationals (in their minimal representation
with positive integers $r_i$, $s_i$) satisfying $a< r< s<
b$.  Since $r<s$, we may choose a rational $\b/\a$ such that
$$
\frac{s_1}{r_1} < \frac{\b}{\a} < \frac{s_2}{r_2}.
$$
We set $R=(\b r_1,\b r_2)$, $S=(\a s_1,\a s_2)$, and note that
$\b r_1 > \a s_1$ and $\b r_2 < \a s_2$. Let $\pi$ be the
oriented subgraph of $\Z^2$ comprising the following:
\begin{letlist}
\item the horizontal oriented path from $0$ to $(\a s_1,0)$,
\item the vertical oriented path from $(\a s_1,0)$ to $S$,
\item the horizontal oriented path from $(\a s_1, \b r_2)$ to $R$.
\end{letlist}
See Figure \ref{fig2}.

\begin{figure}
\centering{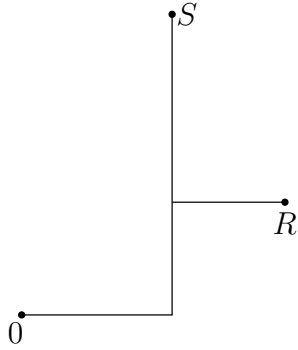}
\caption{The subgraph $\pi$ connecting $0$ to $R$ and to $S$.}\label{fig2}
\end{figure}

We may consider $I(K_{a,b})$ as the subgraph of $\Z^2$ induced
by the corresponding site-set.
Since $\pi$ is finite, we may choose $v\in
I(K_{a,b})$ such that $v + \pi$ is a subgraph of $I(K_{a,b})$, and we consider the set $v_{i,j} := v+iR+jS$, $i,j\ge
0$, of sites of $K_{a,b}$.   Let
$\a=p^N$ where $N=|\pi|$. By the definition of $K_{a,b}$,
each $v_{i,j}+\pi$ is a subgraph of $I(K_{a,b})$, and we declare $v_{i,j}$ {\dof black} if every bond in
$v_{i,j}+\pi$  is open. Note by the construction of the set $\pi$ that the
states of different sites $v_{i,j}$ are independent.

If $1-\a < (1-\pco)^2$, the set of black vertices dominates
(stochastically) the set of sites $w$ of a supercritical
oriented percolation model with the property that both bonds
directed away from  $w$ are open. The claims of the lemma
follow by standard properties of oriented percolation.
\end{proof}

\section*{Acknowledgements}
GRG acknowledges support from Microsoft Research during his
stay as a Visiting Researcher in the Theory Group in Redmond.
This work was completed during his attendance at a programme at
the Isaac Newton Institute, Cambridge.

\bibliographystyle{amsplain}
\bibliography{sphere}

\end{document}